\newcommand{\p}{\mathbb{P}}
\newtheorem{theorem}{Theorem} 
\newtheorem{lemma}{Lemma} 
\newtheorem{proposition}{Proposition} 
\newtheorem{corollary}{Corollary} 
\begin{document}

\title{Endomorphisms of projective bundles over a certain class of varieties}

\author{Ekaterina Amerik, Alexandra Kuznetsova}
\date{\today}
\maketitle

\section*{Introduction}

During the last 20 years, the question which smooth 
projective varieties have endomorphisms of degree greater than one (which we shall
sometimes simply call ``endomorphisms'', as opposed to automorphisms) has 
attracted some attention for both geometric and dynamical reasons (see e.g.
\cite{ARV}, \cite{B}, \cite{N}, \cite{NZ} - this is only a beginning of the list).
Though in this generality it is still far from being solved, there is a number
of partial results suggesting that varieties with such endomorphisms generally 
come from two obvious cases (tori and toric varieties) by means of simple 
geometric constructions such as taking a product with another smooth projective variety or taking a quotient by a finite freely acting group. For instance,
Nakayama proved in the beginning of 2000's that a rational smooth projective surface with endomorphisms 
must be toric. Around the same time, one of the authors of the present note 
has considered the case 
of a projective bundle $X$ over a projective base $B$, $p:X\to B$, and proved 
that if $X$ has
an endomorphism commuting with the projection onto the base, then $X$ must be
a quotient of a product $B'\times \p^r$ by a finite freely acting group.
A simple remark on endomorphisms of projective bundles $X=\p(E)$, where $E$ is a
vector bundle (\cite{A}, p. 18) is that a power  
of any 
$f:X\to X$ sends fibers to fibers and thus must be over an endomorphism of 
the base; so if by any chance we know that all endomorphisms of $B$ are of 
finite order - for instance when $B$ is of general type - then this result 
describes the situation completely.

The argument (the ``only if'' part, the ``if'' part being rather standard) proceeds as follows. One considers the space of all morphisms $R^m(\p(V))$
from $\p^n=\p(V)$ to itself given by degree $m$ polynomials (well-known to be an affine variety) 
and its quotient $M$ by $PGL(V)$ (that is, the spectrum of the ring of the invariants).
It turns out that for $m$ big enough, $PGL(V)$ acts with finite stabilizers, so $M$ is the geometric quotient
(i.e. actually parameterizes the orbits of the action). Now let $X=\p(E)$ be a projective bundle
over $B$. An endomorphism $f$ of $X$ over $B$ naturally induces a morphism from $B$ to $M$. Its image
must be a point since $B$ is projective and $M$ is affine. Let $t$ be a lift of this point to $R^m(\p(V))$.
Over a suitably fine open covering $(U_{\alpha})$ of $B$ we have $f_{\alpha}=h_{\alpha}\cdot t$, where $h_{\alpha}$ is in $PGL_{n+1}({\mathcal O}_{U_{\alpha}})$. Denote by 
$g_{\alpha \beta}$ the transition functions of our projective bundle,
it follows that $h_{\alpha}^{-1}g_{\alpha \beta}h_{\beta}\in Stab(t)$, in other words, by changing the trivialization we make the transition functons of 
$X$ constant with values in a finite group, q. e. d..

In general, for an endomorphism $f$ of $\p(E)$ we may suppose that $f$ is over an endomorphism $g$ of the base;
there are then two cases to be treated: the case where $f$ induces isomorphisms
of fibers (considered as exceptional; when $X=\p(E)$ it means that $g^*E$ is a
shift of $E$ by a line bundle) and the case where the degree of $f$ on the
fibers is greater than one. In \cite{A}, only the rank-two case 
(that of projective line bundles) was considered.
It was established that either $X$ is a finite quotient of a product or $E$ has
a subbundle. This last statement has been pursued further to yield that $E$ must
split into a direct sum of line bundles after a finite, not necessarily \'etale, base change (\cite{A}, theorem 2);
from a different point of view, one can restrict to a specific class of bases 
to obtain a stronger statement. For instance, if $B$ satisfies the condition
$H^1(B, L)=0$ for any line bundle $L$, then having a subbundle is equivalent 
to splitting for rank-two bundles. It therefore follows from the results of \cite{A}
that if $B$ is simply connected and $H^1(B, L)=0$ for any line bundle $L$ on $B$, then an $X$ with 
endomorphisms of degree greater than one on fibers must be the projectivization of a split rank-two bundle.
 
%When the base $B$ itself has endomorphisms

The purpose of the present note is to prove this result in the case of arbitrary rank 
projective bundles over such specific bases.

 \begin{theorem}\label{Endomorphisms}
  Let $B$ be a simply-connected projective variety such that for any line bundle $\mathcal{L}$ its first cohomology $H^1(B, \mathcal{L}) = 0$. 
Let $E$ be a vector bundle of rank $n+1$ on  $B$. If there exists 
  a fiberwise endomorphism 
  \begin{equation}
  \xymatrix{ \p(E) \ar[r]^{\phi} \ar[d]^{\pi}& \p(E)\ar[d]^{\pi}\\
  B \ar[r]^{\Phi} & B}
  \end{equation}
  of degree greater than one on the fibers, then $E$ splits into a direct sum of line bundles:
  \begin{equation}
   E = \mathop{\oplus}\limits_{i=0}^{n+1} \mathcal{L}_i
  \end{equation}
 \end{theorem}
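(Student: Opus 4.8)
The plan is to argue by induction on the rank $n+1$, the engine being a reduction of the structure group of $E$ obtained from the rigidity (isotriviality) of the family of fibre maps, followed by a cohomological splitting of the resulting flag. First I would record the purely cohomological fact that it suffices to reduce the structure group of $E$ to the upper-triangular group, i.e. to produce a full flag of subbundles $0=E_0\subset E_1\subset\dots\subset E_{n+1}=E$ with line-bundle quotients $\mathcal L_i=E_i/E_{i-1}$. Indeed, given such a flag one splits the extensions one at a time: if $E_{i-1}$ is already a direct sum of line bundles then so is $E_{i-1}\otimes\mathcal L_i^{-1}$, whence $H^1(B,E_{i-1}\otimes\mathcal L_i^{-1})=0$ by hypothesis and the extension $0\to E_{i-1}\to E_i\to\mathcal L_i\to 0$ splits; by induction $E=\bigoplus_i\mathcal L_i$. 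So the whole problem becomes the production of a flag compatible with $\phi$.

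To produce the flag I would use the mechanism already present in the introduction. Writing $\phi$ as a $B$-morphism $\p(E)\to\p(\Phi^*E)$ of fibre degree $m>1$, each fibre gives a degree-$m$ morphism $\p(E_b)\to\p(E_{\Phi(b)})$, i.e. a point of $R^m(\p(V),\p(W))$ well defined up to the action of $PGL(V)\times PGL(W)$ on source and target. Since $R^m(\p(V),\p(W))$ is affine, its quotient $M$ by this group is again affine, and the resulting classifying morphism $B\to M$ is constant because $B$ is projective. Hence the family $\{\phi_b\}$ is \emph{isotrivial}: there is one fixed map $t$ to which every fibre map is equivalent, and over a fine enough cover the cocycle of $E$ (together with that of $\Phi^*E$) can be conjugated into the stabiliser $S=\mathrm{Stab}(t)\subset PGL(V)\times PGL(W)$. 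The structure group of $E$ therefore reduces to the image $G=\mathrm{pr}_1(S)\subset PGL(V)$, and $t$ becomes $G$-equivariant through the induced isomorphism $G\xrightarrow{\sim}\mathrm{pr}_2(S)$.

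Two cases are then favourable. If $G$ is finite, then $\p(E)$ is a flat projective bundle with finite monodromy; as $B$ is simply connected the monodromy is trivial, so $\p(E)\cong B\times\p^n$, whence $E\cong\mathcal L^{\oplus(n+1)}$ and we are done. If $\dim G>0$ and the identity component $G^0$ is solvable, then by the Borel fixed point theorem $G^0$ preserves a full flag of $V$; using simple connectedness to dispose of the finite permutation ambiguity coming from $G/G^0$, this flag is $G$-invariant and descends to a flag of subbundles of $E$, which is exactly what the first step requires.

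The difficulty is the remaining case, in which $G^0$ is not solvable and acts on $V$ with a nontrivial semisimple part, possibly irreducibly — this genuinely occurs for single maps (for instance the $PGL_2$-equivariant degree-three self-map of $\p^2$ coming from the inclusion $V_2\subset\mathrm{Sym}^3 V_2$). In such a case $E$ is an associated Schur-type bundle $\mathbb S(F)$ of a lower-rank bundle $F$, and the degree-$m$ dynamics on $\p(E)$ is induced by a map that is merely an \emph{isomorphism} on the fibres of $\p(F)$; that is, $F$ carries only the exceptional, degree-one behaviour $\Phi^*F\cong F\otimes L'$ which the hypothesis ``degree greater than one'' does not directly constrain. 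The crux of the proof is therefore to show that over our special base such exceptional lower-rank pieces cannot be non-split: one must combine the relation between $E$ and its twist $\Phi^*E$, the vanishing $H^1(B,\mathcal L)=0$, and simple connectedness to force $F$, hence $E=\mathbb S(F)$, to split, thereby excluding the non-solvable stabiliser and closing the induction. I expect this control of the exceptional, projectively $\Phi$-periodic subquotients forced by a large stabiliser to be the main technical obstacle.
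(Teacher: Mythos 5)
Your cohomological endgame (a full flag of subbundles plus $H^1(B,\mathcal{L})=0$ forces splitting) is correct and is essentially how the paper concludes, and your starting point (constancy of the classifying map $B\to M$ because $B$ is projective and $M$ affine) is also the paper's. But the step joining them fails. From constancy of $B\to M$ you deduce that the family of fibre maps is isotrivial, i.e. that every $\phi_b$ is $PGL(V)\times PGL(W)$-equivalent to a single $t$, and that the cocycle of $E$ can then be conjugated into $\mathrm{Stab}(t)$. This deduction is valid only when the image point of $B\to M$ corresponds to a closed orbit. Here $M$ is merely the spectrum of the ring of invariants of the two-sided action, and — as the paper stresses — it is \emph{not} a geometric quotient: there is a bad set $M_0$ each of whose points corresponds to infinitely many non-closed orbits, adherent to one another and not separated by invariants. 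Constancy of $B\to M$ only places all the fibre maps in one fibre of $R^m(\p(V),\p(W))\to M$; if the image point lies in $M_0$, the maps $\phi_b$ for different $b$ need not lie in a single orbit, no reduction of the structure group to a stabiliser is available, and your trichotomy on $G=\mathrm{pr}_1(\mathrm{Stab}(t))$ (finite / solvable / non-solvable) never gets started. Since the case of an image point outside $M_0$ is exactly the one already settled by the argument of \cite{A} (étale trivialization plus simple connectedness), your gap sits precisely in the only case where new work is required.

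Moreover, the case you single out as the ``main technical obstacle'' — non-solvable $G^0$ acting through a Schur-type construction — cannot actually occur, so the difficulty is misplaced. For Theorem \ref{Endomorphisms} one may replace $\phi$ by an iterate and assume $m>n+1$; then Lemma 1.2 of \cite{A} (Lemma 1 in the paper) says that no unipotent element of $PGL(V)\times PGL(W)$ stabilizes any point of $R^m(\p(V),\p(W))$, so every stabilizer consists of semisimple elements and its identity component is a torus, in particular solvable. (Your $PGL_2$-equivariant degree-three self-map of $\p^2$ has $m=3=n+1$, which is exactly why it is excluded.) The genuine problem, which the paper solves, is the non-closed-orbit case: by the Hilbert--Mumford criterion in Birkes' form \cite{Hilbert-Mumford}, an unstable fibre map is degenerated by a diagonal one-parameter subgroup, and a Newton-polyhedron analysis (Propositions \ref{NP_inf_stab_orbits} and \ref{NP_non_closed_orbits}) shows that in suitable coordinates the first $s+1$ polynomials of the fibre map depend only on the first $s+1$ variables. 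This is extracted fibre by fibre, with no isotriviality assumption, and yields subbundles $E_0\subsetneq E$ and $F_0\subsetneq F$ with $\phi^{-1}(\p(F_0))=\p(E_0)$ (corollary \ref{subbundles}), together with induced degree-$m$ morphisms $\p(E_0)\to\p(F_0)$ and $\p(E/E_0)\to\p(F/F_0)$; induction on the rank — the reason the paper proves the two-bundle statement, Theorem \ref{Morphism}, rather than the endomorphism statement directly — combined with your splitting step then finishes. What your proposal lacks is any mechanism for producing subbundles when the fibre maps do not form a single orbit.
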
 

What we show is in fact slightly more general, as in \cite{A}.

 \begin{theorem}\label{Morphism}
  Let $B$ be as in the previous theorem and $E$ and $F$ vector bundles of rank $n+1$ on  $B$. If there exists a morphism
  \begin{equation}
   \phi : \p(E) \to \p(F)
  \end{equation}
  over $B$ which is of degree greater than one, then $E$ and $F$ both split into a direct sum of line bundles.  
 \end{theorem}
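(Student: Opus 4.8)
The plan is to run, in the higher-rank setting, the same ``rigidity from affine invariants'' strategy sketched above for the self-map case, the essential new feature being that the relevant group now acts with positive-dimensional stabilizers. Write $V,W$ for $(n+1)$-dimensional vector spaces and let $R^m(\p(V),\p(W))$ be the space of degree-$m$ morphisms $\p(V)\to\p(W)$; it is the complement of the resultant hypersurface in $\p(\mathrm{Hom}(W,\mathrm{Sym}^m V))$, hence affine, and it carries an action of $G:=PGL(V)\times PGL(W)$ by pre- and post-composition. First I would form the affine GIT quotient $M:=R^m(\p(V),\p(W))/\!/G$. A morphism $\phi:\p(E)\to\p(F)$ over $B$ of fibre degree $m$ is a section of the associated bundle with fibre $R^m(\p(V),\p(W))$, and taking $G$-invariants yields a classifying morphism $c:B\to M$; since $B$ is projective and $M$ is affine, $c$ is constant, say with value $[t]$ for some $t\in R^m(\p(V),\p(W))$. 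Thus every fibre $\phi_b:\p(E_b)\to\p(F_b)$ lies in the single GIT fibre over $[t]$.

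The next step is to reduce the structure group of the principal $G$-bundle $P$ of simultaneous frames of $(\p(E),\p(F))$ to the stabiliser $S:=\mathrm{Stab}_G(t)$. Here lies the main obstacle, and the point where the argument genuinely departs from the self-map case: for a self-map the conjugation action of $PGL(V)$ has finite stabilisers once $m$ is large, so $M$ is a geometric quotient and the reduction is automatic; but the $PGL(V)\times PGL(W)$-action has positive-dimensional stabilisers already for monomial maps such as $[x_0:\dots:x_n]\mapsto[x_0^m:\dots:x_n^m]$, so $M$ is not geometric and the GIT fibre over $[t]$ is in general a union of several orbits. I would deal with this by choosing $t=\phi_{b_0}$ at a very general point $b_0$, so that the orbit $O_t$ has maximal dimension among the $\phi_b$; by upper semicontinuity the locus where $\phi_b\in O_t$ is open and dense, there the equivariant map $P\to R^m(\p(V),\p(W))$ factors through $O_t\cong G/S$ and gives a reduction of structure group to $S$, and one extends the resulting subbundles across $B$ by properness of the relevant flag bundle together with normality of the base. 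Having reduced to $S$, I would then invoke simple-connectedness: the component group $S/S^0$ is finite, and a reduction with finite structure group over a simply-connected $B$ is trivial, so the structure group in fact reduces to the identity component $S^0$.

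It remains to analyse $S^0$. Because $t$ is dominant, the projection $S\to PGL(V)$ is injective, identifying $S^0$ with a connected subgroup $G_1\subset PGL(V)$ under which $t$ is equivariant, with induced action on $\p(W)$. The heart of the matter is the claim that, because $\deg t=m\ge 2$ and $t$ is everywhere defined, $G_1$ is solvable. This is the representation-theoretic crux: if $G_1$ were non-solvable it would act with a non-trivial reductive quotient on some subquotient of $V$, and the base-point-freeness of $t$ then fails --- exactly as one sees for $PGL_2$ acting on $\p(\mathrm{Sym}^2)$, where the only equivariant cubic self-map of $\p^2$ is $q\mapsto\mathrm{disc}(q)\cdot q$, which has base points along the discriminant conic and so is not a morphism. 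Granting solvability, Lie--Kolchin provides a complete $G_1$-invariant flag in $V$ and, via the equivariance of $t$, a compatible complete flag in $W$; the reduction of structure group to $S^0$ therefore produces full flags of subbundles in $E$ and in $F$ with line-bundle graded pieces.

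Finally I would assemble the conclusion. If $S^0$ is trivial the structure group is finite and, $B$ being simply connected, $\p(E)$ and $\p(F)$ are trivial projective bundles, whence $E$ and $F$ are twists of trivial bundles by a line bundle and split. Otherwise the full flags give filtrations $0\subset E_1\subset\dots\subset E_{n+1}=E$ and similarly for $F$, with line-bundle quotients; since each successive extension is classified by a cohomology group $H^1(B,\mathcal{L})$ for a line bundle $\mathcal{L}$, which vanishes by hypothesis, every extension splits, and an induction on the length of the flag yields $E=\bigoplus_i\mathcal{L}_i$ and $F=\bigoplus_j\mathcal{M}_j$, as required. I expect the solvability of $S^0$ (the base-point-free, degree-$\ge 2$ equivariance argument) together with the control of the non-geometric GIT quotient to be the two steps demanding real work; the flag-to-splitting passage via $H^1$-vanishing and the simple-connectedness reductions are then formal.
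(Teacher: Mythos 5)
Your overall frame (the affine space $R^m(\p(V),\p(W))$ with its $G=PGL(V)\times PGL(W)$-action, the invariant-theoretic quotient $M$, constancy of the classifying map $B\to M$, and a case split according to the nature of the orbit) is the same as the paper's, and your endgame (flags plus $H^1$-vanishing kills all extensions) is also the paper's. But the way you handle the unstable case contains a genuine gap, and it is exactly the difficulty this paper is built to avoid. Your reduction of the structure group to $S=\mathrm{Stab}_G(t)$ requires all fibre maps $\phi_b$ to lie in the \emph{single} orbit $O_t$, and your argument for this --- take $t=\phi_{b_0}$ with $b_0$ very general, then ``by upper semicontinuity the locus where $\phi_b\in O_t$ is open and dense'' --- is not correct. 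Semicontinuity gives openness of the locus where $\dim G\cdot\phi_b$ is maximal; it says nothing about membership in the fixed orbit $O_t$, which is only a locally closed condition. A fibre of a non-geometric affine quotient can contain a positive-dimensional \emph{family} of pairwise distinct orbits, all of the same maximal dimension: already for $\mathbb{G}_m$ acting on $\mathbb{A}^3$ with weights $(1,1,-1)$, the fibre over the origin is the plane $z=0$, swept out by the pairwise distinct one-dimensional orbits of the points $(1,b,0)$, so a family $b\mapsto (1,b,0)$ meets any single orbit in one point. Thus your good locus may fail to be open, and the proposed rescue --- extending the subbundles across the bad set ``by properness and normality'' --- is also unjustified: the bad set can be a divisor, $B$ is not assumed normal, and even over a smooth base the valuative criterion only produces sections of the flag bundle off a codimension-two set, beyond which one gets subsheaves rather than subbundles. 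The paper never attempts this reduction in the unstable case. Instead it works fibre by fibre: by the Birkes (Hilbert--Mumford type) criterion an unstable fibre map degenerates under a diagonal one-parameter subgroup, the Newton-polyhedron analysis of Section 2 then shows that in suitable coordinates $s+1$ of the target coordinates are polynomials in only $s+1$ of the source coordinates, and the coordinate-free consequence (Corollary \ref{subbundles}) is that the preimage of a linear subspace of each fibre of $\p(F)$ is a linear subspace of the corresponding fibre of $\p(E)$. These subspaces globalize to subbundles $E_0\subset E$, $F_0\subset F$ with $\phi^{-1}(\p(F_0))=\p(E_0)$, inducing morphisms $\p(E_0)\to\p(F_0)$ and $\p(E/E_0)\to\p(F/F_0)$ of the same degree $m$, and the theorem follows by induction on the rank together with $\mathrm{Ext}^1(E/E_0,E_0)=0$. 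In other words, the paper extracts one step of your flag at a time from canonical, orbit-independent data, rather than from a structure-group reduction that is unavailable.

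Your second declared crux --- solvability of $S^0$ --- is asserted with an example but not proved, which is a hole, though a smaller and more repairable one. It can be closed using the paper's own ingredient: for $m>n+1$, Lemma 1.2 of \cite{A} states that no unipotent element of $PGL(V)\times PGL(W)$ stabilizes a point of $R^m(\p(V),\p(W))$; hence $\mathrm{Stab}(t)$ consists of semisimple elements, and a connected linear algebraic group all of whose elements are semisimple is a torus --- in particular solvable, so Lie--Kolchin is not even needed. Note, however, that for a morphism $\p(E)\to\p(F)$ between possibly different bundles one cannot pass to an iterate to force $m>n+1$ (that trick is only available for endomorphisms), so if you rely on this lemma you must either prove the no-unipotent statement for all $m\ge 2$ or restrict the argument accordingly. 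But even with solvability granted, the proposal still fails at the single-orbit reduction step above, and I do not see how to repair it in the form you propose.
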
 
 
 Obviously, theorem \ref{Endomorphisms} follows from this statement: consider the endomorphism $\phi$ as a morphism $\p(E) \to \p(\Phi^*(E))$, and apply the theorem \ref{Morphism}.

 In the ideal situation, one would like to prove the statement of Theorem \ref{Endomorphisms} for
an arbitrary toric base $B$. The reason is that the projectivization of a vector bundle over a toric base
is itself toric if and only if the bundle is split (\cite{D}). This would therefore strongly support the
principle that varieties with endomorphisms are closely related to toric varieties or tori.
However few toric bases (e.g. $\p^n, n\geq 2$, or products of such) actually satisfy the cohomology 
vanishing condition as above; so more work is needed to obtain such a result. It is certainly related
to the fact that we never make use of a condition like $F=g^*E$ in Theorem \ref{Morphism}.

 \section{Reduction to invariant theory}

  Let $V$ and $W$ be vector spaces of dimension $n+1$. Denote by $R^m(\p(V),\p(W))$ the set of all morphisms between $\p(V)$ and $\p(W)$ given 
by homogeneous polynomials of degree $m$ without a common zero except at $(0,0,\dots, 0)$:

 \begin{equation}
 \begin{split}
 y_0 = f_0(x_0,&x_1, \dots, x_n) \\
  y_1 = f_1(x_0,&x_1, \dots, x_n) \\
   &\cdots \\
  y_n = f_n(x_0,&x_1, \dots, x_n)   
 \end{split}
 \end{equation}

This is an affine variety, indeed the complement to the hypersurface defined by the 
resultant of the $f_i$ in the projective space $\p(Hom(W^*, S^m V^*))$,  with the action of $PGL(V) \times PGL(W)$ given by
 \begin{equation}
   ((g,h)\cdot f) (x) = h^{-1}(f(g(x))).
  \end{equation}

The quotient $M$ of $R^m(\p(V),\p(W))$ by this action (i.e. the spectrum of the ring of invariants), 
in contrast with the case of the action of $PGL(V)$ when $V=W$
(\cite{A}) is not a geometric quotient: indeed some points have infinite stabilizers, and all the adherent orbits give
the same point on the quotient. Let us denote by $M_0$ the ``bad subset'' of $M$ (by definition it consists of points
corresponding to orbits not separated by the invariants). 

When some fiber of a vector bundle $E$ over $B$ is identified with $V$ and that of $F$ with $W$, 
a morphism of projective bundles $\p(E)\to \p(F)$ over a base $B$ gives, in the same way as in \cite{A}, a map from $B$ to $M$, which must be constant
as soon as $B$ is projective. If the image point is not in $M_0$ we conclude as before that $\p(E)$ and $\p(F)$ trivialize
after a finite unramified base change. If $B$ is simply-connected, this yields that these are already trivial on $B$,
and in particular they split into a direct sum of line bundles. So the interesting case is when the image point lands in
$M_0$. In this situation, we strive to deduce some informaion about the geometry of our morphism. We aim to show
that $E$ and $F$ have subbundles $E'$ and $F'$ such that the inverse image of $\p(F')$ is $\p(E')$ and that the map $f$
induces a morphism on the quotients. This shall enable us to conclude by induction in the case when the cohomological
condition on $B$ is satisfied.

Let us also remark that replacing our original endomorphism $\phi$ of $\p(E)$ by a power, we may assume that $m$ is greater than the rank $n+1$ of the vector bundles $E$ and $F$, as we shall for the computations in the next section.

 \section{Unstable morphisms}% between projective spaces}
  In this section we consider two vector spaces $V$ and $W$ of dimension $n+1$ and a morphism $f$ between their projectivisations of degree $d=m^n$. First of all assume $f$ is stabilized by an infinite subgroup $Stab(f)$ 
  in $PGL(V) \times PGL(W)$. Recall from \cite{A}:
 \begin{lemma}[\cite{A}, Lemma 1.2]
  If $m>n+1$, then a unipotent element $u$ of $PGL(V) \times PGL(W)$ does not stabilize any element of $R^m(\p(V),\p(W))$.
 \end{lemma}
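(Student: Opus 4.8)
The plan is to show that a nontrivial unipotent stabilizer is incompatible with the defining property of $R^m(\p(V),\p(W))$, namely that the homogeneous components of a morphism have no common zero. First I would reduce to a one-parameter subgroup. If a unipotent $u=(g,h)\neq 1$ stabilizes $f$, then the Zariski closure of the cyclic group it generates is a one-dimensional unipotent subgroup $U\cong\mathbb{G}_a$, say $U=\{(g_t,h_t)=(\exp tN,\exp tM)\}$ with $N,M$ nilpotent, and $U$ still stabilizes $[f]$. Lifting $f$ to a polynomial map $F\colon V\to W$, the stabilizer relation reads $h_t^{-1}F(g_t x)=\chi(t)F(x)$ for some character $\chi\colon U\to\mathbb{G}_m$; since there are no nonconstant homomorphisms $\mathbb{G}_a\to\mathbb{G}_m$ in characteristic zero, $\chi\equiv 1$ and we obtain the clean equivariance $F\circ g_t=h_t\circ F$. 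Differentiating at $t=0$ yields $\mathcal{D}F=M\circ F$, where $\mathcal{D}$ is the nilpotent derivation of $S^mV^*$ induced by $N$ and $M$ acts on the target components of $F=(f_0,\dots,f_n)$.

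Next I would exploit this nilpotent structure on both sides. Choosing Jordan bases — equivalently, the $\mathfrak{sl}_2$-gradings given by Jacobson--Morozov for $N$ on $V$ and $M$ on $W$ — I would organize $F$ by weights. The operator $\mathcal{D}$ strictly raises the source weight, so the components of $F$ valued in the bottom weight space of $W$ are annihilated by $\mathcal{D}$, i.e. are highest-weight vectors in $S^mV^*$. The aim is to propagate this so as to force all components $f_i$ to vanish at the point $P\in\p(V)$ given by the lowest-weight line, which sits at the bottom of the $g$-invariant flag and is $g$-fixed. The mechanism is already visible for a single Jordan block on each side: there $\mathcal{D}=\sum_{i}x_{i+1}\partial_{x_i}$ and $\mathcal{D}f_i=f_{i+1}$, so every $f_i$ with $i\ge 1$ lies in the ideal $(x_1,\dots,x_n)$ cutting out $P=[e_0]$ and hence vanishes there, while the surviving top coefficient of $f_0$ (its $x_0^m$ term) is killed because the $\mathfrak{sl}_2$-string through the lowest-weight monomial $x_0^m$ has length of order $nm$, so $\mathcal{D}^{k}x_0^m\neq 0$ for the relevant $k$. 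Carrying out this weight bookkeeping in general produces a common zero of $f_0,\dots,f_n$, contradicting $f\in R^m(\p(V),\p(W))$.

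The hypothesis $m>n+1$ enters precisely in this last step: it guarantees that the weight strings are long enough that the extreme-weight (``top coefficient'') components cannot survive the equivariance, uniformly over all Jordan types. Accordingly, I expect the main obstacle to be exactly this uniformity. The single-block computation is short, but the general case requires controlling the interaction between an arbitrary partition for $N$ on the source and an arbitrary partition for $M$ on the target, and verifying that $m>n+1$ suffices in the worst configuration; pinning down the exact point $P$ at which all components vanish, and the exponent of $\mathcal{D}$ needed there, is where the real work lies.

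As a guide and consistency check I would keep the Geometric Invariant Theory picture in mind: $R^m(\p(V),\p(W))$ is the complement of the resultant hypersurface, and non-vanishing of the resultant is a (semi)stability condition for the $SL(V)\times SL(W)$-action, for which stable points have finite stabilizer. The lemma is the infinitesimal, characteristic-explicit form of this principle, and the direct weight computation above is what makes the threshold $m>n+1$ effective rather than merely asymptotic.
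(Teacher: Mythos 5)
This lemma is nowhere proved in the paper: it is imported verbatim from \cite{A} (Lemma 1.2 there), so there is no in-paper argument to compare yours against, and your attempt must be judged on its own. Your route --- close up $\langle u\rangle$ to a copy of $\mathbb{G}_a$ inside the stabilizer, kill the character (no nontrivial homomorphisms $\mathbb{G}_a\to\mathbb{G}_m$ in characteristic zero) to get honest equivariance $F\circ g_t=h_t\circ F$, differentiate to $\mathcal{D}F=M\circ F$, then force a common zero by weight considerations --- is sound, and you locate the role of $m>n+1$ correctly. But as written this is a plan, not a proof: you verify only the single-block case and defer what you yourself call ``the real work''. Two concrete gaps remain.

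First, the case $N=0$, $M\neq 0$ (the unipotent element acts trivially on $V$ but not on $W$) escapes your mechanism entirely: there is no Jacobson--Morozov grading on the source, $\mathcal{D}=0$, and the equivariance degenerates to $M\circ F=0$. Here one needs a separate (easy) argument: $F$ then takes values in $\ker M\subsetneq W$, so after a linear change of coordinates on $W$ some component of $F$ vanishes identically, and the remaining at most $n$ forms on $\p(V)\cong\p^n$ have a common zero by the projective dimension theorem --- contradiction. Second, the deferred general case does close, and cleanly, so it should be written out: put $M$ in Jordan form on $W$; within a block of size $r$ the components of $F$ are $\phi,\mathcal{D}\phi,\dots,\mathcal{D}^{r-1}\phi$ with $\mathcal{D}^{r}\phi=0$. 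Let $e^+$ span the $N$-killed end of a longest Jordan string of $N$, of weight $\mu=s-1\geq 1$; evaluation at $e^+$ reads off the coefficient of $(x^+)^m$, which is the unique monomial of minimal weight $-m\mu$ in $S^mV^*$. Since $\mathcal{D}$ raises weight, nothing in the image of $\mathcal{D}$ has a weight-$(-m\mu)$ component, so all the components $\mathcal{D}^j\phi$, $j\geq 1$, vanish at $[e^+]$. For each generator $\phi$, its weight-$(-m\mu)$ part $\psi$ is automatically a lowest-weight vector, so if $\psi\neq 0$ then $\mathcal{D}^k\psi\neq 0$ for all $k\leq m\mu$; but projecting $\mathcal{D}^{r}\phi=0$ to weight $-m\mu+2r$ gives $\mathcal{D}^{r}\psi=0$ with $r\leq n+1<m\leq m\mu$, a contradiction. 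Hence $\psi=0$, every component vanishes at the $g_t$-fixed point $[e^+]$, and $f\notin R^m(\p(V),\p(W))$. Note finally that the binding configuration is not the one you computed (two full blocks, which needs only $m\geq 2$, since the string length is $mn+1$ against the exponent $n+1$), but an $N$-block of size $2$ against an $M$-block of size $n+1$: there the string length is $m+1$ against the exponent $n+1$, which is exactly where $m>n+1$ (indeed $m\geq n+1$) is needed.
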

  By this lemma the subgroup $Stab(f) \subset PGL(V) \times PGL(W)$ consists of semisimple elements. Take any of these elements and consider the minimal subgroup in the stabilizer that contains this element. The connected 
 component of the unity of this subgroup is an algebraic torus or trivial. If it is trivial for any element in $Stab(f)$, then the stabilizer is discrete and therefore is finite. If $Stab(f)$ is infinite, it contains a subgroup
 isomorphic to $\mathbb{G}_m$. Lifting its action on $\p(V)$ and $\p(W)$ to an action on $V$ and $W$ we assume that it is given by 
 \begin{equation} \label{diag}
 \begin{split}
  &g_{b,c}: \mathbb{G}_m \to GL(V)\times GL(W)\\
  &g_{b,c}(\lambda) = (diag(\lambda^{c_0},\lambda^{c_1},\dots, \lambda^{c_n});diag(\lambda^{b_0},\lambda^{b_1},\dots, \lambda^{b_n}))
 \end{split}
 \end{equation}
 in appropriate coordinates on $V$ and $W$.
 
 In these coordinates, let the morphism $f$ be given by $(f_0,f_1,\dots,f_n)$ with
 \begin{equation}
 \begin{split}
   y_0 =f_0(x_0,\dots, x_n)=& \sum_{|I|= m} a_{0,I} x^I \\
   y_1 =f_1(x_0,\dots, x_n)=& \sum_{|I|= m} a_{1,I} x^I\\
   &\cdots \\
   y_n =f_n(x_0,\dots, x_n)=& \sum_{|I|= m} a_{n,I} x^I
 \end{split}
 \end{equation}
 Here $I = (i_0,i_1,\dots,i_n)$ is a multiindex and $|I| = i_0 +i_1+\dots+i_n$.
 
 Applying an element of the diagonal group in $g_{b,c}(\lambda)\in Stab(f)$, 
we get the following formulae for $g_{b,c} \cdot f$:
 \begin{equation}\label{action}
  y_j = \sum\lambda^{\langle c ,I\rangle-b_j}a_{j,I} x^I 
 \end{equation}
 Here $\langle -,- \rangle$ denotes the scalar product between multiindexes:
 \begin{equation} \label{level}
  F(I) := \langle c ,I\rangle = \sum_{j=0}^n c_j i_j
 \end{equation}

 Since $g_{b,c}$ stabilizes $f$ there exists a constant $C$, such that for any $j, I$ with $a_{j,I} \ne 0$  
 \begin{equation}\label{log_plane}
  \langle c,I\rangle -b_j = C
 \end{equation}
 
 Consider the $n+1$)--dimensional lattice $\Lambda \cong \mathbb{Z}^{n+1} \subset \mathbb{R}^{n+1}= \Lambda\otimes \mathbb{R}$. Denote by $p_i\in \Lambda$ the vertex corresponding to the $i-$th base vector $(0,\dots, 0,m,0,\dots,0)$. For any subset
 $\{p_{i_1}, \dots, p_{i_k}\}$ denote by $\Delta(p_{i_1}, \dots, p_{i_k})\subset \Lambda$ the simplex of dimension $k-1$ with vertexes $p_{i_1}, \dots, p_{i_k}$. Set
 \begin{equation}
  \Delta = \Delta(p_0,p_1,\dots,p_n) \subset \Lambda
 \end{equation}

 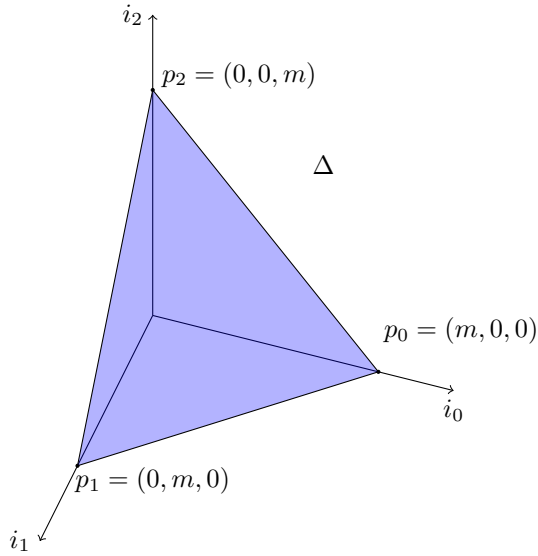
\begin{figure}[h]
\begin{center}
 \begin{tikzpicture}
\draw [->] (0,0) -- (0,4);
\draw [->] (0,0) -- (-1.5,-3);
\draw [->] (0,0) -- (4,-1);

\node[right] at (0,3.2) {$p_2 = (0,0,m)$};
\node[below] at (0,-1.9) {$p_1=(0,m,0)$};
\node[above] at (4.1,-0.5) {$p_0=(m,0,0)$};

\node[right] at (2,2) {$\Delta$};

 \node[left] at (0,4) {$i_2$};
 \node[left] at (-1.5,-3) {$i_1$};
 \node[below] at (4,-1) {$i_0$};

 \draw[fill] (0,3) circle [radius =0.02];
 \draw[fill] (-1,-2) circle [radius =0.02];
 \draw[fill] (3,-0.75) circle [radius =0.02];
 
 \draw[fill=blue,fill opacity=0.3] (0,3) -- (-1,-2)-- (3, -0.75)-- (0,3);
\end{tikzpicture}
 \caption{The simplex $\Delta$ in the case $n=2$}  
\end{center}
 \end{figure}

 Equations \eqref{log_plane} define $n+1$ hyperplanes in $\mathbb{R}^n$ (not necessarily distinct). Let us denote them by $\Pi_j$.
  
  Now let us consider the Newton polyhedron of $f_j$:
 \begin{equation}
  NP(f_j) := Conv\{I \in \Lambda \mid a_{j,I} \ne 0\}
 \end{equation}
 and prove  some easy facts about Newton polyhedra of the morphism $f$.
 \begin{proposition}
  If $f$ has infinite stabilizer then $NP(f_j)\subset \Pi_j \cap \Delta$.
 \end{proposition}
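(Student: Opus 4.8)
The plan is to reduce the statement to a single observation: every lattice point $I$ that actually occurs in $f_j$, i.e. with $a_{j,I}\neq 0$, already lies in the intersection $\Pi_j\cap\Delta$; the inclusion of the whole Newton polyhedron then follows formally from convexity. So first I would fix $j$ and isolate the two linear constraints satisfied by such an $I$.

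The first constraint comes from homogeneity. Since $f_j$ is homogeneous of degree $m$, any exponent $I=(i_0,\dots,i_n)$ occurring in it has non-negative integer coordinates with $|I|=i_0+\cdots+i_n=m$. These are exactly the conditions describing the simplex $\Delta=\Delta(p_0,\dots,p_n)$ inside the affine hyperplane $\{|I|=m\}$, with the lattice points $p_i=(0,\dots,m,\dots,0)$ as its vertices; hence $I\in\Delta$. The second constraint is the stabilizer relation itself. Because $g_{b,c}$ fixes $f$ up to scalar, equation \eqref{log_plane} gives $\langle c,I\rangle-b_j=C$ for every $I$ with $a_{j,I}\neq 0$, and this is precisely the defining equation of the hyperplane $\Pi_j$. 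Therefore each such $I$ satisfies $I\in\Pi_j$ as well, and consequently $I\in\Pi_j\cap\Delta$.

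It remains to pass from the exponents to their convex hull. The set $\Pi_j\cap\Delta$ is the intersection of an affine hyperplane with a simplex, hence convex; since it contains every $I$ with $a_{j,I}\neq 0$, it contains their convex hull, which is by definition $NP(f_j)$. This yields $NP(f_j)\subset\Pi_j\cap\Delta$, as desired. I do not expect a genuine obstacle here, as the proposition is essentially a dictionary entry translating the stabilizer condition \eqref{log_plane} into the language of Newton polyhedra. The only point meriting a moment's care is that the constant $C$ in \eqref{log_plane} is one and the same for all indices $j$ and $I$, so that the $\Pi_j$ form a family of parallel hyperplanes $\langle c,\cdot\rangle=b_j+C$; this is exactly what \eqref{log_plane} records, and it guarantees that the exponents relevant to each $f_j$ all sit on the single hyperplane $\Pi_j$.
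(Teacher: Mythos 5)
Your proof is correct and follows the same route as the paper's own argument: homogeneity of degree $m$ places every exponent in $\Delta$, the stabilizer relation \eqref{log_plane} places it on $\Pi_j$, and convexity of $\Pi_j\cap\Delta$ carries the inclusion over to the convex hull $NP(f_j)$. Your explicit remark about the single constant $C$ making the $\Pi_j$ parallel is a slightly more careful spelling-out of what the paper leaves implicit, but it is the same proof.
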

\begin{proof}
 As the degree of $f_j$ equals $m$, the polyhedron $NP(f_j)$ lies in the simplex $\Delta$. By the previous calculation we see that if  $g_{b,c}$ stabilizes $f$, then \eqref{log_plane} holds and consequently the multi-indices of the monomials of $f_j$ lies in the 
 hyperplane $\Pi_j$.
\end{proof}
\begin{lemma}\label{v_in_p}
  If $f$ is a morphism of projective spaces then every vertex of $\Delta$ is contained in one of the hyperplanes $\Pi_j$.
 \end{lemma}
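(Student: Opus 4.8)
The plan is to read the vertices of $\Delta$ off as the coordinate points of $\p(V)$ and then to combine the absence of base points for $f$ with the preceding Proposition. Recall that the vertex $p_i=(0,\dots,0,m,0,\dots,0)$ is exactly the multi-index of the monomial $x_i^m$, so that the coefficient of that monomial in the component $f_j$ is $a_{j,p_i}$. The argument will be carried out one vertex at a time: fix $i$ and produce a component $f_j$ in which $x_i^m$ actually occurs, which by the Proposition places $p_i$ on the corresponding hyperplane.

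First I would evaluate the $f_j$ at the $i$-th coordinate point $e_i=[0:\dots:1:\dots:0]\in\p(V)$, the $1$ sitting in the $i$-th slot. Any monomial $x^I$ with $I\neq p_i$ contains a factor $x_k$ with $k\neq i$ and therefore vanishes at $e_i$, while $x^{p_i}(e_i)=1$. Hence
\begin{equation}
 f_j(e_i)=a_{j,p_i},\qquad j=0,1,\dots,n,
\end{equation}
so that evaluating $f$ at $e_i$ simply isolates the coefficients of the pure power $x_i^m$.

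Next I would invoke that $f$ is a genuine morphism of projective spaces, i.e. the $f_j$ have no common zero away from the origin. In particular $f$ is defined at $e_i$, so the vector $(f_0(e_i),\dots,f_n(e_i))$ is nonzero; by the displayed identity this forces $a_{j,p_i}\neq 0$ for at least one index $j$. Thus $p_i$ belongs to the Newton polyhedron $NP(f_j)$, which by the Proposition is contained in $\Pi_j\cap\Delta$. Therefore $p_i\in\Pi_j$, and since $i$ was arbitrary every vertex of $\Delta$ lies on one of the hyperplanes $\Pi_j$, as claimed.

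The argument is short, and I do not expect a genuine obstacle: the lemma is essentially a reformulation of the fact that a morphism of projective spaces cannot send all coordinate points to the base locus. The only place that requires care is the bookkeeping at the start, namely checking that the coordinate point $e_i$ selects exactly the coefficient $a_{j,p_i}$ and no other monomial; once this translation between the combinatorial statement (a vertex lying on some $\Pi_j$) and the geometric input (base-point-freeness at $e_i$) is made precise, the conclusion is immediate from the Proposition.
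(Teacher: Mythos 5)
Your proof is correct and is essentially the paper's own argument: the paper assumes a vertex $p_0$ lies on no $\Pi_j$, concludes that no $f_j$ contains $x_0^m$, and hence that all $f_j$ vanish at the coordinate point $(1:0:\dots:0)$, contradicting that $f$ is a morphism. You simply run the same computation in contrapositive form, evaluating at $e_i$ to isolate the coefficient $a_{j,p_i}$ and citing the Proposition $NP(f_j)\subset\Pi_j\cap\Delta$; the two arguments are identical in substance.
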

 \begin{proof}
  Assume the vertex $p_0 = (m,0, \dots, 0)$ does not lie in any $\Pi_j$. Consequently no polynomial $f_j$ contains the monomial $x_0^m$. Then all $f_j$ vanish at the point $(1:0: \dots: 0)\in \p(V)$, so $f$ is not a morphism.
 \end{proof}
 \begin{lemma}\label{p_contains_v}
  Each hyperplane $\Pi_j$ contains some vertex of $\Delta$. Moreover a hyperplane repeated exactly $k+1$ times
(i.e. corresponding to the polynomials $f_0, \dots, f_k$, up to renumbering) contains exactly $k+1$ vertices
of $\Delta$.
 \end{lemma}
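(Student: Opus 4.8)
The plan is to show that the multiset of levels cut out on the vertices of $\Delta$ by the functional $F$ coincides with the multiset of levels $\beta_j := C + b_j$ of the hyperplanes $\Pi_j$; both assertions of the lemma are then immediate. Write $\alpha_i := F(p_i) = m c_i$ for the level of the vertex $p_i$, let $v_1 < \dots < v_r$ be the distinct values occurring among the $\alpha_i$, and for each $s$ put $I_s = \{\, i : \alpha_i = v_s \,\}$ and $d_s = |I_s|$, so that $\sum_s d_s = n+1$. By Lemma \ref{v_in_p} every $\alpha_i$ occurs among the $\beta_j$, so each $v_s$ is the level of at least one hyperplane; set $m_s = |\{\, j : \beta_j = v_s \,\}|$. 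Since there are exactly $n+1$ indices $j$, we have $\sum_s m_s \le n+1$, and the whole lemma follows once I prove the single inequality $m_s \ge d_s$ for every $s$: indeed $\sum_s d_s = n+1$ then forces $\sum_s m_s = n+1$, whence $m_s = d_s$ for all $s$ and no $\beta_j$ avoids $\{v_1, \dots, v_r\}$. The first conclusion says precisely that every $\Pi_j$ meets the vertex set, and the equalities $m_s = d_s$ are exactly the ``moreover'' part.

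Before that, note that each $f_j$ is nonzero: the morphism $f$ is finite of positive degree $m^n$, hence surjective, so its image cannot lie in a coordinate hyperplane $\{\, y_j = 0 \,\}$. Thus every $\Pi_j$ genuinely carries the Newton polyhedron of a nonzero form, and the $n+1$ hyperplanes are all well defined.

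To bound $m_s$ from below I restrict $f$ to a coordinate subspace. Let $V_s \subset V$ be spanned by the basis vectors $e_i$, $i \in I_s$, so that $\p(V_s) \cong \p^{\,d_s-1}$ is the locus $\{\, x_\ell = 0 : \ell \notin I_s \,\}$, and consider $f|_{\p(V_s)}$. Its $j$-th component is obtained from $f_j$ by keeping only the monomials supported on the variables $x_i$, $i \in I_s$. Any such monomial has multi-index $I$ lying in the face $\Delta$ spanned by $\{\, p_i : i \in I_s \,\}$, on which $F$ is constant equal to $v_s$, since $c_i = v_s/m$ for $i \in I_s$ and $|I| = m$. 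On the other hand every monomial of $f_j$ satisfies $F(I) = \beta_j$. Hence if $\beta_j \ne v_s$ the restriction $f_j|_{\p(V_s)}$ vanishes identically, and only the $m_s$ components with $\beta_j = v_s$ can survive. Since $f$ has no common zero on $\p(V)$, neither do the restricted components on $\p(V_s) \cong \p^{\,d_s-1}$; but fewer than $d_s$ homogeneous forms on $\p^{\,d_s-1}$ always have a common zero, as the intersection of at most $d_s-1$ hypersurfaces in $\p^{\,d_s-1}$ is nonempty. Therefore at least $d_s$ of the restricted components are nonzero, giving $m_s \ge d_s$, as required.

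The only real subtlety — and the step I expect to be the crux — is the observation that the components surviving the restriction to $\p(V_s)$ are exactly those attached to the hyperplane through the vertices $\{\, p_i : i \in I_s \,\}$; once this is in place, the no-common-zeros condition together with a one-line pigeonhole count finishes the argument. Everything else (surjectivity of $f$, constancy of $F$ on a coordinate face, the dimension count for hypersurfaces in projective space) is standard.
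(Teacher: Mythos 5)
Your proof is correct and takes essentially the same route as the paper: the paper also combines Lemma \ref{v_in_p} with a counting argument (phrased as a pigeonhole contradiction between the partition of $H(f)$ into coincident hyperplanes and the induced partition of $V(\Delta)$, rather than your level-by-level inequality $m_s \ge d_s$), and its key step is identical to yours --- restrict $f$ to the coordinate subspace spanned by the vertices on one hyperplane, observe that all components lying on other (parallel, hence disjoint) hyperplanes vanish there, and conclude by dimension theory in projective space. The two write-ups differ only in bookkeeping, not in substance.
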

 \begin{proof}

 Since all the hyperplanes $\Pi_j$ are parallel, if they contain a common vertex they coincide. There is
a natural partition of the set $H(f)$ of equations 

\begin{equation}\label{union_of_Pi}
    H_1 \sqcup H_2 \sqcup \dots \sqcup H_l = H(f)
  \end{equation}
where a subset $H_i$ consists of equations corresponding to the same hyperplane $\Pi_i$,
as well as of the set of vertices 

 \begin{equation}
   V(\Delta) = V_1 \sqcup V_2 \sqcup \dots \sqcup V_l
  \end{equation}
where $V_i$ consists of vertices lying in $\Pi_j$.

 Since $|V(\Delta)|=n+1=|H(f)|$ it follows that either the statement of the lemma is true or 
$k+1=|V_i|>|H_i|=s+1$ for some $i$.

  Assume $|V_i|>|H_i|$. The polynomials $f_i$ indexed by $H_i$ contain monomials depending only on the 
variables indexed by $V_i$,
but the others do not: up to renumbering, $f_{s+1}, \dots, f_n$ are zero as soon as $x_{k+1}=\dots x_n=0$.
Then $f_0,\dots, f_s$ define a regular map of the subspace of $\p(V)$ given by the vanishing of 
$x_{k+1},\dots x_n$ to the subspace of $\p(W)$ given by the vanishing of $y_{s+1}, \dots, y_n$, but this is impossible
since the dimension of the source would then be greater than that of the target.

%   Therefore they define a regular map 
% \begin{equation}
% \p^{|V_i|-1}\to \p^{|H_i|-1} 
%  \end{equation}
%   of degree $m$. But there is no such map. 

 \end{proof} 
 From these assertions we deduce the following statement.
 \begin{proposition} \label{NP_inf_stab_orbits}
  Let $f$ be a morphism between $\p(V)$ and $\p(W)$ with infinite stabilizer in $PGL(V) \times PGL(W)$. There exist $V'\subset V(\Delta)$ and $H'\subset H(f)$ such that 
$|V'|=|H'|<n+1$ and 
  \begin{equation}
   NP(f_j)\subset \Delta(V')
   \end{equation}
  for any $f_j \in H'$.
 \end{proposition}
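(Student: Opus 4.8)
The plan is to single out, among the hyperplanes $\Pi_j$, the one realizing the \emph{smallest} value of the level function \eqref{level}, and to take its associated vertices and equations as $V'$ and $H'$. The key geometric point is that an extremal hyperplane is a \emph{supporting} hyperplane of $\Delta$, so that its intersection with $\Delta$ is a genuine face rather than an interior cross-section; this is exactly what upgrades the containment $NP(f_j)\subset\Pi_j\cap\Delta$, already granted by the proposition asserting $NP(f_j)\subset\Pi_j\cap\Delta$, into the desired $NP(f_j)\subset\Delta(V')$.

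First I would establish that $l\ge 2$, where $l$ is the number of distinct hyperplanes among the $\Pi_j$ as in \eqref{union_of_Pi}. Suppose $l=1$, so that all the $\Pi_j$ coincide in a single hyperplane. By Lemma \ref{v_in_p} every vertex of $\Delta$ lies on it, so all the values $\langle c,p_k\rangle=mc_k$ are equal and hence all the $c_k$ coincide; and since every equation corresponds to this same hyperplane, all the $b_j$ coincide as well. Then the one--parameter subgroup $g_{b,c}$ of \eqref{diag} acts trivially both on $\p(V)$ and on $\p(W)$, so it is trivial in $PGL(V)\times PGL(W)$. This contradicts the fact that the infinitude of $Stab(f)$ furnishes a nontrivial copy of $\mathbb{G}_m$ inside it, whence $l\ge 2$.

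Now let $\ell=\min_j(b_j+C)$ be the smallest level, attained by some hyperplane $\Pi_i$, and set $V'=V_i$ and $H'=H_i$, the corresponding blocks of the partition. By Lemma \ref{p_contains_v} one has $|V'|=|H'|$, and because $l\ge 2$ splits the $(n+1)$--element sets $V(\Delta)$ and $H(f)$ into at least two nonempty blocks, each block is proper; thus $|V'|=|H'|<n+1$, as required. It remains to check, for $f_j\in H'$, the containment $NP(f_j)\subset\Delta(V')$, which I regard as the crux. We already know $NP(f_j)\subset\Pi_j\cap\Delta=\Pi_i\cap\Delta$, so it suffices to prove $\Pi_i\cap\Delta=\Delta(V')$. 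Writing a point of $\Delta$ in barycentric form $I=\sum_k s_k p_k$ with $s_k\ge 0$ and $\sum_k s_k=1$, its level is $\langle c,I\rangle=\sum_k s_k\,(mc_k)$. Since $\ell=\min_k mc_k$, the equation $\langle c,I\rangle=\ell$ becomes $\sum_k s_k\,(mc_k-\ell)=0$, a sum of nonnegative terms, forcing $s_k=0$ whenever $mc_k>\ell$; hence $I$ is supported on $V'$ and lies in $\Delta(V')$, while the reverse inclusion is clear. The delicate point is precisely this appeal to an \emph{extremal} level: for a non-extremal $\Pi_j$ the section $\Pi_j\cap\Delta$ is in general strictly larger than any face of $\Delta$, so the argument would break down and the statement could fail; choosing the minimal (or, symmetrically, the maximal) level is what makes $\Pi_i$ supporting and the section coincide with $\Delta(V')$.
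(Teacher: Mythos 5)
Your proof is correct and is essentially the paper's argument: both proofs single out an extremal level of the linear function $F$ among the parallel hyperplanes $\Pi_j$, so that the chosen hyperplane supports $\Delta$ and meets it exactly in the face $\Delta(V')$, with $|V'|=|H'|$ supplied by Lemma \ref{p_contains_v} and properness forced by the nontriviality of the $\mathbb{G}_m$ inside the stabilizer (the paper compresses your barycentric computation into ``Obviously, $\Pi_j \cap \Delta = \Delta(V')$''). The only deviation is that you take the \emph{minimal} level where the paper takes the \emph{maximal} one, $M'=\max_i F(p_i)$; this is immaterial here by symmetry, but the maximal choice is the one reused later in the proof of Proposition \ref{NP_non_closed_orbits}, where the half-spaces $\Pi_j^+$ point upward and only the maximal-level hyperplane cuts $\Delta$ in a face, so your convention would have to be flipped there.
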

 \begin{proof}
  Let us recall the function $F$ from \eqref{level}. Denote $M' = \max \{F(p_i)\}$, where $p_i$ runs through the set of vertices of $\Delta$. Set 
  \begin{equation}
   H'= \{f_j\mid F|_{\Pi_j}=M'\}
  \end{equation}
  As $F$ is not constant on $\Delta$, $\emptyset\subsetneq H'\subsetneq H(f)$. Denote by $V'$ the set of vertices on the hyperplane corresponding to the equations in $H'$. By the previous lemma $|V'|=|H'|$.
  Obviously, $\Pi_j \cap \Delta = \Delta(V')$ and so the polynomials  $f_j\in H'$ depend
only on the variables corresponding to the vertices in $V'$.  
  \end{proof}
  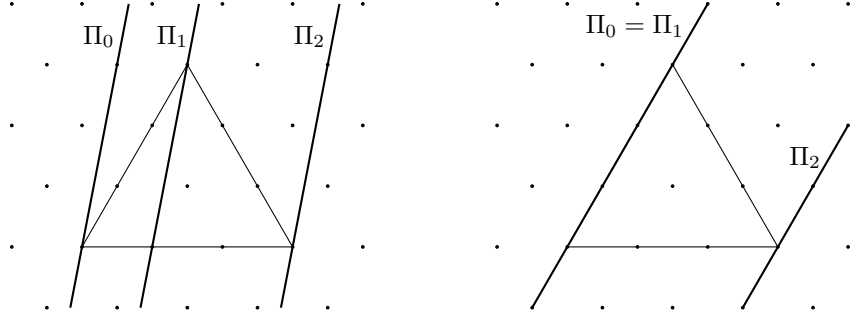
\begin{figure}
 \begin{center}
 \begin{tikzpicture}[scale=0.7]
  \draw (0,0) -- (2,3.464) -- (4,0) -- (0,0);
 
  \draw[thick] (1.111,-1.155) -- (2.222,4.619);
  \draw[thick] (-0.222,-1.155) -- (0.888,4.619);
  \draw[thick] (3.778,-1.155) -- (4.889,4.619);
  
  \node[left] at (0.8, 4) {$\Pi_0$}; 
  \node[left] at (2.2, 4) {$\Pi_1$}; 
  \node[left] at (4.8, 4) {$\Pi_2$};

  \draw[fill] (-1.333,0) circle [radius =0.025];
  \draw[fill] (0,0) circle [radius =0.025];
  \draw[fill] (1.333,0) circle [radius =0.025];
  \draw[fill] (2.667,0) circle [radius =0.025];
  \draw[fill] (4,0) circle [radius =0.025];
  \draw[fill] (5.333,0) circle [radius =0.025];

  \draw[fill] (-0.667,-1.155) circle [radius =0.025];
  \draw[fill] (0.667,-1.155) circle [radius =0.025];
  \draw[fill] (2,-1.155) circle [radius =0.025];
  \draw[fill] (3.333,-1.155) circle [radius =0.025];
  \draw[fill] (4.667,-1.155) circle [radius =0.025];
 
  \draw[fill] (-1.333,2.309) circle [radius =0.025];
  \draw[fill] (0,2.309) circle [radius =0.025];
  \draw[fill] (1.333,2.309) circle [radius =0.025];
  \draw[fill] (2.667,2.309) circle [radius =0.025];
  \draw[fill] (4,2.309) circle [radius =0.025];
  \draw[fill] (5.333,2.309) circle [radius =0.025];

  \draw[fill] (-0.667,1.155) circle [radius =0.025];
  \draw[fill] (0.667,1.155) circle [radius =0.025];
  \draw[fill] (2,1.155) circle [radius =0.025];
  \draw[fill] (3.333,1.155) circle [radius =0.025];
  \draw[fill] (4.667,1.155) circle [radius =0.025];

  \draw[fill] (-1.333,4.619) circle [radius =0.025];
  \draw[fill] (0,4.619) circle [radius =0.025];
  \draw[fill] (1.333,4.619) circle [radius =0.025];
  \draw[fill] (2.667,4.619) circle [radius =0.025];
  \draw[fill] (4,4.619) circle [radius =0.025];
  \draw[fill] (5.333,4.619) circle [radius =0.025];

  \draw[fill] (-0.667,3.464) circle [radius =0.025];
  \draw[fill] (0.667,3.464) circle [radius =0.025];
  \draw[fill] (2,3.464) circle [radius =0.025];
  \draw[fill] (3.333,3.464) circle [radius =0.025];
  \draw[fill] (4.667,3.464) circle [radius =0.025];
  
 \end{tikzpicture}
\hspace{1.5cm}  \begin{tikzpicture}[scale=0.7]
  \draw (0,0) -- (2,3.464) -- (4,0) -- (0,0);
 
  \draw[thick] (-0.667,-1.155) -- (2.667,4.619);
  \draw[thick] (3.333,-1.155) -- (5.333,2.309); 
  
  \node[left] at (2.4,4.2) {$\Pi_0=\Pi_1$};
  \node[left] at (5, 1.7) {$\Pi_2$};
 
  \draw[fill] (-1.333,0) circle [radius =0.025];
  \draw[fill] (0,0) circle [radius =0.025];
  \draw[fill] (1.333,0) circle [radius =0.025];
  \draw[fill] (2.667,0) circle [radius =0.025];
  \draw[fill] (4,0) circle [radius =0.025];
  \draw[fill] (5.333,0) circle [radius =0.025];

  \draw[fill] (-0.667,-1.155) circle [radius =0.025];
  \draw[fill] (0.667,-1.155) circle [radius =0.025];
  \draw[fill] (2,-1.155) circle [radius =0.025];
  \draw[fill] (3.333,-1.155) circle [radius =0.025];
  \draw[fill] (4.667,-1.155) circle [radius =0.025];
 
  \draw[fill] (-1.333,2.309) circle [radius =0.025];
  \draw[fill] (0,2.309) circle [radius =0.025];
  \draw[fill] (1.333,2.309) circle [radius =0.025];
  \draw[fill] (2.667,2.309) circle [radius =0.025];
  \draw[fill] (4,2.309) circle [radius =0.025];
  \draw[fill] (5.333,2.309) circle [radius =0.025];

  \draw[fill] (-0.667,1.155) circle [radius =0.025];
  \draw[fill] (0.667,1.155) circle [radius =0.025];
  \draw[fill] (2,1.155) circle [radius =0.025];
  \draw[fill] (3.333,1.155) circle [radius =0.025];
  \draw[fill] (4.667,1.155) circle [radius =0.025];

  \draw[fill] (-1.333,4.619) circle [radius =0.025];
  \draw[fill] (0,4.619) circle [radius =0.025];
  \draw[fill] (1.333,4.619) circle [radius =0.025];
  \draw[fill] (2.667,4.619) circle [radius =0.025];
  \draw[fill] (4,4.619) circle [radius =0.025];
  \draw[fill] (5.333,4.619) circle [radius =0.025];

  \draw[fill] (-0.667,3.464) circle [radius =0.025];
  \draw[fill] (0.667,3.464) circle [radius =0.025];
  \draw[fill] (2,3.464) circle [radius =0.025];
  \draw[fill] (3.333,3.464) circle [radius =0.025];
  \draw[fill] (4.667,3.464) circle [radius =0.025];
  
 \end{tikzpicture}

\end{center}
  \caption{Two types of Newton polyhedra of $f_0$, $f_1$ and $f_2$ in the case $n=2$.} 
 \end{figure}

 So far, we have discussed the morphisms of projective spaces with infinite stabilizer in $PGL(V)\times PGL(W)$. But our goal is to study the morphisms $f$ with non-closed orbits under the group action. By a generalization 
 of the Hilbert--Mumford criterion (\cite{Hilbert-Mumford} Theorem 4.2), we reach the boundary of the orbit $(PGL(V)\times PGL(W))\cdot f$ while acting on $f$ by one-parameter subgroups $g_{b,c}(\mathbb{G}_m)$ as in 
 \eqref{diag}. As earlier, the map $g_{b,c}(\lambda)\cdot f$ is
given by the equations \eqref{action}.
  Let us introduce a new notation
  \begin{equation}
  K_j = min_{\{I| a_{j,I} \ne 0 \} } \{ \langle c,I\rangle-b_j\} 
 \end{equation}
 Set $K= min_j \{K_j\}$. Then we can describe the limit of $g_{b,c}(\lambda)\cdot f$ when $\lambda$ goes to zero.
 \begin{lemma}\label{unst-eqs}
  Denote $\bar{f} =  \lim_{\lambda \to 0}(g_{b,c}(\lambda)\cdot f)$, then
 \begin{equation}
  \bar{f}_j(x_0,\dots,x_n)=\sum_{\langle c,I\rangle-b_j = K} a_{j,I} x^I 
 \end{equation}
 and the original map was of type:
 \begin{equation}
  f_j = \sum_{\langle c,I\rangle-b_0 = K} a_{j,I} x^I + \sum_{\langle c,I\rangle-b_0 > K} a_{j,I} x^I
 \end{equation}
 \end{lemma}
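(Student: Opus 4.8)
The plan is to compute the limit directly from the explicit formula \eqref{action}, the only genuine subtlety being that $g_{b,c}(\lambda)\cdot f$ is a point of the projective space $\p(\mathrm{Hom}(W^*, S^m V^*))$ rather than an honest tuple of polynomials; consequently the limit must be taken after a suitable rescaling of the homogeneous coordinates, and it is exactly this rescaling that selects the surviving monomials.

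First I would record that, by \eqref{action}, the $j$-th coordinate polynomial of $g_{b,c}(\lambda)\cdot f$ is $\sum_I \lambda^{\langle c,I\rangle - b_j} a_{j,I} x^I$, the sum running over multi-indices $I$ with $a_{j,I}\neq 0$. Since the morphism is determined by the tuple $(f_0,\dots,f_n)$ only up to a common scalar, I am free to multiply all these polynomials by $\lambda^{-K}$, where $K=\min_j K_j$ is the global minimum of the exponents $\langle c,I\rangle - b_j$ taken over all $j$ and all $I$ with $a_{j,I}\neq 0$. After this normalization the $j$-th polynomial becomes $\sum_I \lambda^{\langle c,I\rangle - b_j - K} a_{j,I} x^I$, in which every exponent is nonnegative and vanishes precisely when $\langle c,I\rangle - b_j = K$.

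Next I would pass to the limit $\lambda\to 0$ term by term: monomials with $\langle c,I\rangle - b_j > K$ carry a strictly positive power of $\lambda$ and tend to zero, whereas those with $\langle c,I\rangle - b_j = K$ have exponent zero and survive unchanged. This yields exactly $\bar f_j = \sum_{\langle c,I\rangle - b_j = K} a_{j,I} x^I$, and simultaneously the asserted decomposition of the original $f_j$ into its lowest-weight part (exponent equal to $K$) and the strictly higher-weight remainder, since by the definition of $K$ no monomial of any $f_j$ can have exponent below $K$.

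Finally I would check that $\bar f$ is a legitimate point of the projective space, i.e. that the tuple $(\bar f_0,\dots,\bar f_n)$ is not identically zero: by the very choice $K=\min_j K_j$ there is at least one pair $(j,I)$ with $a_{j,I}\neq 0$ and $\langle c,I\rangle - b_j = K$, so some $\bar f_j$ is nonzero and the limit exists. The one step I would emphasize as requiring care is precisely the projective rescaling by $\lambda^{-K}$: without it the naive coordinatewise limit would be identically zero, and it is this normalization that isolates the lowest-weight terms and identifies $\bar f$ as the boundary point to which the generalized Hilbert--Mumford criterion applies.
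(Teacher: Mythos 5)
Your proof is correct and is precisely the ``straightforward calculation'' that the paper leaves to the reader: apply \eqref{action}, rescale the tuple by $\lambda^{-K}$ using that the map is only defined up to a common scalar, and let $\lambda\to 0$ so that exactly the weight-$K$ monomials survive. Your emphasis on the projective normalization (and the check that the limit is a nonzero point of $\p(\mathrm{Hom}(W^*,S^mV^*))$) is exactly the point that makes the calculation work, and it also implicitly corrects the paper's typo $b_0$ for $b_j$ in the second displayed equation of the lemma.
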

 
The proof is a straightforward calculation.

 Obviously, the group $G_{b,c}$ stabilize the morphism $\bar{f}$, so $\bar{f}$ has infinite stabilizer and in Proposition \ref{NP_inf_stab_orbits} we have a description of its Newton polyhedron. Now consider the set of half-spaces
 \begin{equation}
  \Pi_j^+ = \{I\in \Lambda\otimes \mathbb{R}\mid \langle c,I\rangle-b_j\ge K \subset \mathbb{R}^n\}
 \end{equation}
 Lemma \ref{unst-eqs} implies that $NP(f_j)= \Pi_j^+\cap \Delta$. From the proof of Proposition \ref{NP_inf_stab_orbits} we see that there is always a hyperplane $\Pi_j$
intersecting our simplex $\Delta$ by a face and such that the rest of the simplex is 
below $\Pi_j$. Thus the following holds.

 \begin{proposition} \label{NP_non_closed_orbits}
  If $f$ is an unstable morphism between $\p(V)$ and $\p(W)$, then there are nonempty sets  $V'\subset V(\Delta)$ and $H'\subset H(f)$ such that $|V'|=|H'|<n+1$ and 
  \begin{equation}
   NP(f_j)\subset \Delta(V')
   \end{equation}
  for any $f_j \in H'$.
 \end{proposition}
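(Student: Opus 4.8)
The plan is to use the one-parameter degeneration supplied by the Hilbert--Mumford criterion in order to transport the structural description of the infinite-stabilizer case (Proposition \ref{NP_inf_stab_orbits}) from the limit $\bar f$ back to the original unstable map $f$. Since $f$ is unstable its orbit is non-closed, so by the generalized Hilbert--Mumford criterion (\cite{Hilbert-Mumford}, Theorem 4.2) there is a one-parameter subgroup $g_{b,c}$ as in \eqref{diag} for which $\bar f=\lim_{\lambda\to 0} g_{b,c}(\lambda)\cdot f$ lies in the boundary of the orbit $(PGL(V)\times PGL(W))\cdot f$. As already noted, $G_{b,c}$ stabilizes $\bar f$, so $\bar f$ has infinite stabilizer and Proposition \ref{NP_inf_stab_orbits} applies to it. The only fact I would extract about the \emph{original} map comes from Lemma \ref{unst-eqs}: every monomial of $f_j$ satisfies $\langle c,I\rangle-b_j\ge K$, since $K=\min_j K_j$, and hence $NP(f_j)\subset\Pi_j^+\cap\Delta$ for every $j$.

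The heart of the argument is to select $V'$ and $H'$ via the \emph{top} hyperplane and then observe that the half-space collapses onto a face. Following the proof of Proposition \ref{NP_inf_stab_orbits}, consider the linear function $F(I)=\langle c,I\rangle$ on $\Delta$ and its maximum $M'=\max_i F(p_i)$ over the vertices; because $F$ is linear, $M'$ is simultaneously its maximum over all of $\Delta$. I would take $H'=\{f_j\mid F|_{\Pi_j}=M'\}$, that is, the polynomials whose hyperplane $\Pi_j=\{\langle c,I\rangle=K+b_j\}$ satisfies $K+b_j=M'$, and let $V'$ be the set of vertices of $\Delta$ lying on it. The crucial observation is that for $f_j\in H'$ one has $\Pi_j^+\cap\Delta=\{I\in\Delta\mid F(I)\ge M'\}=\{I\in\Delta\mid F(I)=M'\}$, and this last set is precisely the maximal face $\Delta(V')$. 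Combining with the containment $NP(f_j)\subset\Pi_j^+\cap\Delta$ from the previous step yields $NP(f_j)\subset\Delta(V')$, as required. In other words, although for the original $f$ the Newton polyhedron only lies in a half-space rather than on a hyperplane (as it would for $\bar f$), choosing the half-space attached to the maximal value of $F$ forces it into the maximal face all the same.

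It remains to verify the combinatorial bookkeeping, and this is where I expect the only genuine subtlety. Nonemptiness of $H'$ and $V'$ is immediate, since a vertex attaining $M'$ lies on some $\Pi_j$, which then belongs to $H'$; and $|V'|=|H'|$ is exactly the content of Lemma \ref{p_contains_v} applied to $\bar f$. The strict inequality $|V'|<n+1$ is equivalent to $F$ being non-constant on $\Delta$, i.e.\ to $g_{b,c}$ acting non-trivially on $\p(V)$ (not all $c_i$ equal), and securing this is the main obstacle. I would argue that a destabilizing direction can indeed be chosen with this property: if $g_{b,c}$ acted trivially on $\p(V)$, then all monomials of a given $f_j$ would be scaled by the same power of $\lambda$, so the limit $\bar f$ would have some components identically zero and would therefore fail (by the dimension count of Lemma \ref{v_in_p} and Lemma \ref{p_contains_v}) to be a morphism onto a projective space of the same dimension, contradicting $\bar f\in R^m(\p(V),\p(W))$. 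Once non-constancy of $F$ is in place, $\Delta(V')$ is a proper face and every assertion of the proposition follows.
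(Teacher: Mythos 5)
Your proof is correct and follows essentially the same route as the paper: pass to the Hilbert--Mumford limit $\bar f$, take $H'$ to be the set of equations whose hyperplane realizes the maximal value $M'$ of $F$ exactly as in the proof of Proposition \ref{NP_inf_stab_orbits}, and observe that since $\Delta\subset\{I \mid F(I)\le M'\}$ the half-space $\Pi_j^+$ meets $\Delta$ precisely in the face $\Delta(V')$, which contains $NP(f_j)$ for $f_j\in H'$. Your final verification that the destabilizing one-parameter subgroup cannot act trivially on $\p(V)$ (so that $F$ is non-constant on $\Delta$ and hence $|V'|=|H'|<n+1$) makes explicit a point the paper's proof leaves implicit, and is a sound addition.
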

 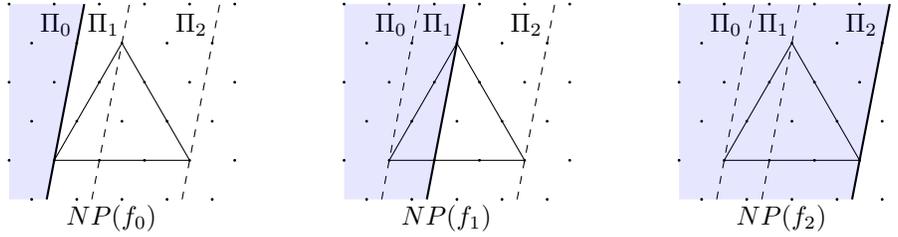
\begin{figure}[h]
 \begin{center}
 \begin{tikzpicture}[scale=0.45]
  \node[left] at (3.3	,-1.7) {$NP(f_0)$};
  \draw (0,0) -- (2,3.464) -- (4,0) -- (0,0);
 
  \draw[dashed] (1.111,-1.155) -- (2.222,4.619);
  \draw[thick] (-0.222,-1.155) -- (0.888,4.619);
  \draw[dashed] (3.778,-1.155) -- (4.889,4.619);
  
   \node[left] at (0.8, 4) {$\Pi_0$}; 
  \node[left] at (2.2, 4) {$\Pi_1$}; 
  \node[left] at (4.8, 4) {$\Pi_2$}; 
 
%   \path[fill=blue, fill opacity=0.1] (-1.333,-1.155) -- (1.111,-1.155) -- (2.222,4.619) -- (-1.333, 4.619);
  \path[fill=blue, fill opacity=0.1] (-1.333,-1.155) -- (-0.222,-1.155) -- (0.888,4.619) -- (-1.333, 4.619);
%   \path[fill=blue, fill opacity=0.1] (-1.333,-1.155) -- (3.778,-1.155) -- (4.889,4.619) -- (-1.333, 4.619);

  \draw[fill] (-1.333,0) circle [radius =0.025];
  \draw[fill] (0,0) circle [radius =0.025];
  \draw[fill] (1.333,0) circle [radius =0.025];
  \draw[fill] (2.667,0) circle [radius =0.025];
  \draw[fill] (4,0) circle [radius =0.025];
  \draw[fill] (5.333,0) circle [radius =0.025];

  \draw[fill] (-0.667,-1.155) circle [radius =0.025];
  \draw[fill] (0.667,-1.155) circle [radius =0.025];
  \draw[fill] (2,-1.155) circle [radius =0.025];
  \draw[fill] (3.333,-1.155) circle [radius =0.025];
  \draw[fill] (4.667,-1.155) circle [radius =0.025];
 
  \draw[fill] (-1.333,2.309) circle [radius =0.025];
  \draw[fill] (0,2.309) circle [radius =0.025];
  \draw[fill] (1.333,2.309) circle [radius =0.025];
  \draw[fill] (2.667,2.309) circle [radius =0.025];
  \draw[fill] (4,2.309) circle [radius =0.025];
  \draw[fill] (5.333,2.309) circle [radius =0.025];

  \draw[fill] (-0.667,1.155) circle [radius =0.025];
  \draw[fill] (0.667,1.155) circle [radius =0.025];
  \draw[fill] (2,1.155) circle [radius =0.025];
  \draw[fill] (3.333,1.155) circle [radius =0.025];
  \draw[fill] (4.667,1.155) circle [radius =0.025];

  \draw[fill] (-1.333,4.619) circle [radius =0.025];
  \draw[fill] (0,4.619) circle [radius =0.025];
  \draw[fill] (1.333,4.619) circle [radius =0.025];
  \draw[fill] (2.667,4.619) circle [radius =0.025];
  \draw[fill] (4,4.619) circle [radius =0.025];
  \draw[fill] (5.333,4.619) circle [radius =0.025];

  \draw[fill] (-0.667,3.464) circle [radius =0.025];
  \draw[fill] (0.667,3.464) circle [radius =0.025];
  \draw[fill] (2,3.464) circle [radius =0.025];
  \draw[fill] (3.333,3.464) circle [radius =0.025];
  \draw[fill] (4.667,3.464) circle [radius =0.025];
  
 \end{tikzpicture}
\hspace{1.3cm}\begin{tikzpicture}[scale=0.45]
  \node[left] at (3.3	,-1.7) {$NP(f_1)$};
  \draw (0,0) -- (2,3.464) -- (4,0) -- (0,0);
 
  \draw[thick] (1.111,-1.155) -- (2.222,4.619);
  \draw[dashed] (-0.222,-1.155) -- (0.888,4.619);
  \draw[dashed] (3.778,-1.155) -- (4.889,4.619);
  
   \node[left] at (0.8, 4) {$\Pi_0$}; 
  \node[left] at (2.2, 4) {$\Pi_1$}; 
  \node[left] at (4.8, 4) {$\Pi_2$}; 
 
  \path[fill=blue, fill opacity=0.1] (-1.333,-1.155) -- (1.111,-1.155) -- (2.222,4.619) -- (-1.333, 4.619);
%  \path[fill=blue, fill opacity=0.1] (-1.333,-1.155) -- (-0.222,-1.155) -- (0.888,4.619) -- (-1.333, 4.619);
%   \path[fill=blue, fill opacity=0.1] (-1.333,-1.155) -- (3.778,-1.155) -- (4.889,4.619) -- (-1.333, 4.619);

  \draw[fill] (-1.333,0) circle [radius =0.025];
  \draw[fill] (0,0) circle [radius =0.025];
  \draw[fill] (1.333,0) circle [radius =0.025];
  \draw[fill] (2.667,0) circle [radius =0.025];
  \draw[fill] (4,0) circle [radius =0.025];
  \draw[fill] (5.333,0) circle [radius =0.025];

  \draw[fill] (-0.667,-1.155) circle [radius =0.025];
  \draw[fill] (0.667,-1.155) circle [radius =0.025];
  \draw[fill] (2,-1.155) circle [radius =0.025];
  \draw[fill] (3.333,-1.155) circle [radius =0.025];
  \draw[fill] (4.667,-1.155) circle [radius =0.025];
 
  \draw[fill] (-1.333,2.309) circle [radius =0.025];
  \draw[fill] (0,2.309) circle [radius =0.025];
  \draw[fill] (1.333,2.309) circle [radius =0.025];
  \draw[fill] (2.667,2.309) circle [radius =0.025];
  \draw[fill] (4,2.309) circle [radius =0.025];
  \draw[fill] (5.333,2.309) circle [radius =0.025];

  \draw[fill] (-0.667,1.155) circle [radius =0.025];
  \draw[fill] (0.667,1.155) circle [radius =0.025];
  \draw[fill] (2,1.155) circle [radius =0.025];
  \draw[fill] (3.333,1.155) circle [radius =0.025];
  \draw[fill] (4.667,1.155) circle [radius =0.025];

  \draw[fill] (-1.333,4.619) circle [radius =0.025];
  \draw[fill] (0,4.619) circle [radius =0.025];
  \draw[fill] (1.333,4.619) circle [radius =0.025];
  \draw[fill] (2.667,4.619) circle [radius =0.025];
  \draw[fill] (4,4.619) circle [radius =0.025];
  \draw[fill] (5.333,4.619) circle [radius =0.025];

  \draw[fill] (-0.667,3.464) circle [radius =0.025];
  \draw[fill] (0.667,3.464) circle [radius =0.025];
  \draw[fill] (2,3.464) circle [radius =0.025];
  \draw[fill] (3.333,3.464) circle [radius =0.025];
  \draw[fill] (4.667,3.464) circle [radius =0.025];
  
 \end{tikzpicture}
\hspace{1.3cm}\begin{tikzpicture}[scale=0.45]
  \node[left] at (3.3	,-1.7) {$NP(f_2)$};
  \draw (0,0) -- (2,3.464) -- (4,0) -- (0,0);
 
  \draw[dashed] (1.111,-1.155) -- (2.222,4.619);
  \draw[dashed] (-0.222,-1.155) -- (0.888,4.619);
  \draw[thick] (3.778,-1.155) -- (4.889,4.619);
 
  \node[left] at (0.8, 4) {$\Pi_0$}; 
  \node[left] at (2.2, 4) {$\Pi_1$}; 
  \node[left] at (4.8, 4) {$\Pi_2$}; 
 
%   \path[fill=blue, fill opacity=0.1] (-1.333,-1.155) -- (-0.222,-1.155) -- (0.888,4.619) -- (-1.333, 4.619);
%   \path[fill=blue, fill opacity=0.1] (-1.333,-1.155) -- (1.111,-1.155) -- (2.222,4.619) -- (-1.333, 4.619);
  \path[fill=blue, fill opacity=0.1] (-1.333,-1.155) -- (3.778,-1.155) -- (4.889,4.619) -- (-1.333, 4.619);

  \draw[fill] (-1.333,0) circle [radius =0.025];
  \draw[fill] (0,0) circle [radius =0.025];
  \draw[fill] (1.333,0) circle [radius =0.025];
  \draw[fill] (2.667,0) circle [radius =0.025];
  \draw[fill] (4,0) circle [radius =0.025];
  \draw[fill] (5.333,0) circle [radius =0.025];

  \draw[fill] (-0.667,-1.155) circle [radius =0.025];
  \draw[fill] (0.667,-1.155) circle [radius =0.025];
  \draw[fill] (2,-1.155) circle [radius =0.025];
  \draw[fill] (3.333,-1.155) circle [radius =0.025];
  \draw[fill] (4.667,-1.155) circle [radius =0.025];
 
  \draw[fill] (-1.333,2.309) circle [radius =0.025];
  \draw[fill] (0,2.309) circle [radius =0.025];
  \draw[fill] (1.333,2.309) circle [radius =0.025];
  \draw[fill] (2.667,2.309) circle [radius =0.025];
  \draw[fill] (4,2.309) circle [radius =0.025];
  \draw[fill] (5.333,2.309) circle [radius =0.025];

  \draw[fill] (-0.667,1.155) circle [radius =0.025];
  \draw[fill] (0.667,1.155) circle [radius =0.025];
  \draw[fill] (2,1.155) circle [radius =0.025];
  \draw[fill] (3.333,1.155) circle [radius =0.025];
  \draw[fill] (4.667,1.155) circle [radius =0.025];

  \draw[fill] (-1.333,4.619) circle [radius =0.025];
  \draw[fill] (0,4.619) circle [radius =0.025];
  \draw[fill] (1.333,4.619) circle [radius =0.025];
  \draw[fill] (2.667,4.619) circle [radius =0.025];
  \draw[fill] (4,4.619) circle [radius =0.025];
  \draw[fill] (5.333,4.619) circle [radius =0.025];

  \draw[fill] (-0.667,3.464) circle [radius =0.025];
  \draw[fill] (0.667,3.464) circle [radius =0.025];
  \draw[fill] (2,3.464) circle [radius =0.025];
  \draw[fill] (3.333,3.464) circle [radius =0.025];
  \draw[fill] (4.667,3.464) circle [radius =0.025];
  
 \end{tikzpicture}
\caption{Here are Newton polyhedra of $f_i$ in the case $n=2$. On each picture we highlight~$NP(f_i)$ with a blue colour.}
\end{center}
 \end{figure}  
 \begin{proof}
 Actually, consider the set $H'$ from the previous lemma. As for any $\Pi_j\in H'$, the restriction of function $F$ to $\Pi_j$ equals $M'=\max\{F(p_i)\}$, then
 \begin{equation}
  \Delta \subset \{ I|\ F(I)\le M'\}
 \end{equation}
 Therefore for any $\Pi_j \in H'$ the half-space $\Pi_j^+$ also intersects $\Delta$ by $\Delta(V')$.
 \end{proof}
 
In the language of equations this means that the first $s+1$ equations depend only on the first $s+1$ variables.

 \section{Proof of the theorem}
 From the previous section we deduce a useful corollary about morphisms between projective bundles:
 \begin{corollary}\label{subbundles}
 Assume $\phi:\p(E) \to \p(F)$ is a morphism over the base $B$ of degree $d>1$, such that its restriction to a fiber corresponds to an unstable orbit in $R^m(\p(V),\p(W))$. Then there are subbundles $E_0 \subsetneq E$ and 
 $F_0\subsetneq F$, such that 
 \begin{equation}
  \phi^{-1}(\p(F_0)) = \p(E_0)
 \end{equation}
 and $0<rk(E_0)=rk(F_0)<rk(E)=rk(F)$.
 \end{corollary}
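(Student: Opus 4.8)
The plan is to prove the statement fibrewise first and then argue that the resulting linear subspaces assemble into subbundles. Fix $b\in B$ and trivialise $E$ and $F$ in a neighbourhood of $b$, so that the restriction $f=\phi_b\colon\p(V)\to\p(W)$ is an unstable morphism in $R^m(\p(V),\p(W))$. By Proposition \ref{NP_non_closed_orbits} there are subsets $V'\subset V(\Delta)$ and $H'\subset H(f)$ with $s+1:=|V'|=|H'|<n+1$, and in suitable coordinates (numbering so that $V'=\{p_0,\dots,p_s\}$ and $H'=\{f_0,\dots,f_s\}$) the polynomials $f_0,\dots,f_s$ involve only the variables $x_0,\dots,x_s$. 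I would then set $E_0=\{x_0=\dots=x_s=0\}\subset V$ and $F_0=\{y_0=\dots=y_s=0\}\subset W$, both of rank $n-s$, observing that $0<n-s\le n<n+1$ since $1\le s+1<n+1$. Because $f_0,\dots,f_s$ are homogeneous of positive degree and vanish on $E_0$, the inclusion $\p(E_0)\subseteq f^{-1}(\p(F_0))=\{f_0=\dots=f_s=0\}$ is immediate; the real content is the reverse inclusion.

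The crux is to upgrade this inclusion to an equality by a dimension count. A morphism $\p(V)\to\p(W)$ given by forms of degree $m\ge 1$ with no common zero is finite: the pullback of $\mathcal{O}(1)$ is $\mathcal{O}(m)$, which is ample, so no subvariety of $\p(V)$ can be contracted, and every fibre is finite. Hence $f^{-1}(\p(F_0))$ has dimension at most $\dim\p(F_0)=n-s-1$. On the other hand, since $f_0,\dots,f_s$ depend only on $x_0,\dots,x_s$, the locus $\{f_0=\dots=f_s=0\}\subset\p(V)$ is the join of the linear space $\p(E_0)$ with the subvariety $Z=\{f_0=\dots=f_s=0\}\subset\p^{s}_{[x_0:\dots:x_s]}$; this join has dimension $n-s-1$ exactly when $Z=\emptyset$ and dimension $\ge n-s$ otherwise. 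Comparing with the bound forced by finiteness, I conclude $Z=\emptyset$ and therefore $\phi_b^{-1}(\p(F_0))=\p(E_0)$, with $\operatorname{rk}E_0=\operatorname{rk}F_0=n-s$ as required. The same computation restricted to $\p(E_0)$ shows that $f_{s+1},\dots,f_n$ have no common zero there, so $f$ maps $\p(E_0)$ finitely onto $\p(F_0)$; in particular $\phi_b(\p(E_0))=\p(F_0)$, which gives an intrinsic description of $F_0$ in terms of $E_0$.

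It remains to globalise, and this is where I expect the main difficulty to lie. Since the induced morphism $B\to M$ is constant with image a single point of $M_0$, the associated degeneration type — in particular the integer $s$ — is the same for every fibre, so the fibrewise subspaces have locally constant rank $n-s$. The subspaces $E_{0,b}$ are moreover canonical: they are read off from the optimal destabilising one–parameter subgroup attached to $\phi_b$ (unique up to conjugacy), or equivalently one may take $F_{0,b}=\phi_b(\p(E_{0,b}))$ and recover $E_{0,b}=\phi_b^{-1}(\p(F_{0,b}))$. Either description varies algebraically with $b$ and is insensitive to the choice of trivialisation, so over a trivialising cover the $E_{0,b}$ and $F_{0,b}$ glue into subbundles $E_0\subset E$ and $F_0\subset F$ of rank $n-s$; the fibrewise identity then yields $\phi^{-1}(\p(F_0))=\p(E_0)$ over $B$ together with $0<\operatorname{rk}E_0=\operatorname{rk}F_0<\operatorname{rk}E=\operatorname{rk}F$. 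The hard part is precisely this gluing: one must check that the fibrewise linear data are genuinely canonical and of constant rank, so that they descend to honest subbundles rather than merely a fibrewise collection; by contrast the fibrewise equality is a clean consequence of the finiteness of $\phi_b$ and the block structure furnished by Proposition \ref{NP_non_closed_orbits}.
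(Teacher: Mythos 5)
Your fibrewise argument is correct and, at its core, is the paper's own: you invoke Proposition \ref{NP_non_closed_orbits} to obtain coordinates in which $f_0,\dots,f_s$ involve only $x_0,\dots,x_s$, you take the same subspaces $E_0=\{x_0=\dots=x_s=0\}$ and $F_0=\{y_0=\dots=y_s=0\}$, and you prove $f^{-1}(\p(F_0))=\p(E_0)$ by a dimension count exploiting the cone structure of $\{f_0=\dots=f_s=0\}$. The only real divergence is the mechanism of that count. The paper never uses finiteness of $f$: it takes a putative extra point $P$ of the preimage, observes that the $(n-s)$-plane spanned by $P$ and $\p(E_0)$ lies in $\{f_0=\dots=f_s=0\}$, and intersects this plane with the locus $\{f_{s+1}=\dots=f_n=0\}$, which has dimension at least $s$ because it is cut out by $n-s$ forms; subvarieties of $\p^n$ of complementary dimensions meet, and any intersection point is a common zero of all the $f_j$, contradicting that $f$ is a morphism. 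You instead bound $\dim f^{-1}(\p(F_0))\le n-s-1$ via finiteness of $f$ (ampleness of $f^*\mathcal{O}_{\p(W)}(1)=\mathcal{O}_{\p(V)}(m)$ plus quasi-finiteness and the fiber-dimension theorem) and compare with the dimension $\ge n-s$ of the join when $Z\ne\emptyset$. Both are sound and buy the same thing; the paper's version is more elementary, yours reaches for a slightly bigger hammer.

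Where you genuinely depart from the paper is in the attention paid to globalization, and your caution there is warranted, but one of your two proposed fixes does not work. The paper dispatches the gluing in a single sentence; you rightly call it the hard part, yet your ``equivalent intrinsic description'' is circular: $F_{0,b}=\phi_b(\p(E_{0,b}))$ and $E_{0,b}=\phi_b^{-1}(\p(F_{0,b}))$ characterize each subspace in terms of the other, so they do not show that the pair is canonically attached to $\phi_b$; indeed a pair satisfying the required equality need not be unique (for $f=(x_0^m:\dots:x_n^m)$ every pair of matching coordinate subspaces works). Of your two suggestions, only the Kempf-type optimal destabilizing one-parameter subgroup --- whose associated parabolic, hence flag, is canonical and varies algebraically in families --- can actually carry the gluing argument, and that machinery is not developed in the paper either. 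Since the paper offers no justification at all for this step, your proposal is no less complete than the paper's own proof; just be aware that the burden you correctly identified is not discharged by the circular description.
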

 \begin{proof}
  By the results in the previous section, in any fiber of $\p(F)$ there are coordinates in which for any $0\le j\le s$
  \begin{equation}
   y_j = f_j(x_0, \dots, x_s)
  \end{equation}
  We claim that the preimage of the subspace $H=\{y_0=\dots=y_s=0\}$ is the subspace $\{x_0=\dots=x_s=0\}$. Indeed the last subspace is certainly contained in the preimage of the first one. If there is another point $P=(p_0:\dots p_s:p_{s+1}:\dots :p_n)$ in that 
preimage, consider 
the projective subspace generated by $P$ and the last $n-s$ base vectors: its dimension
is $n-s$, so it must have nonempty intersection with the subvariety given by the
equations 
\begin{equation}
f_{s+1}(x_0, \dots, x_n)=\dots = f_n (x_0, \dots, x_n)=0
\end{equation}
which has dimension at least $s$. Any point in this intersection must be an 
indeterminacy point of $f$, a contradiction.

These subspaces $H$ fit together in a subbundle $F_0 \subsetneq F$. The same happen
to their preimages, giving a subbundle $\phi^{-1}(F_0) = E_0 \subsetneq E$.   
 \end{proof}
 
 To complete the proof of the theorem let us consider a linear mapping induced by the morphism $\phi$:
 \begin{equation}
  \phi^*: F^* \to S^m E^*
 \end{equation}
 As we have shown we have subbundles $E_0$ and $F_0$, such that the following diagram commutes:
 \begin{equation}
  \xymatrix{
   0\ar[r] & (F\slash F_0)^* \ar[r] \ar[d]_{(\phi\slash \phi_0)^*} & F^* \ar[d]_{\phi^*} \ar[r] & F_0^* \ar[d]_{\phi^*_0} \ar[r] & 0 \\
   0\ar[r] &(S^m E\slash S^m E_0)^*  \ar[r] & S^m E^* \ar[r] &  S^m E_0^* \ar[r] & 0
   }
 \end{equation}
 Consider the bundle $(S^m E\slash S^m E_0)^*$ and write
 \begin{equation}
  (S^m E\slash S^m E_0)^*  \cong \oplus_{i=0}^{m-1} S^i E^*_0 \otimes S^{m-i} (E\slash E_0)^*
 \end{equation}
 In particular there is a projection
 \begin{equation}
   (S^m E\slash S^m E_0)^* \xrightarrow{pr_0} S^m(E\slash E_0)^*
 \end{equation}
and $pr_0 \circ (\phi\slash \phi_0)^* = \psi^*$ induces a map between projective bundles $\p(E\slash E_0)$ and $\p(F\slash F_0)$ given by degree $m$ polynomials. In fact 
this map is regular, that is, a morphism. To check this one observes that 
one may view $(x_0:\dots : x_s)$ and $(y_0:\dots : y_s)$ from corollary \ref{subbundles} as coordinates on the projectivization of the quotients, and the map of these projectivizations is then given by $f_0, \dots f_s$.
To say that this map has no indeterminacy point $(p_0:\dots: p_s)$ is the same as to say
that the preimage of $\p(F_0)$ from corollary \ref{subbundles} contains nothing but $\p(E_0)$.    
 
% Actually, to show this we should check whether $\psi^*$ induces the map of projective spaces on each fiber. The restriction of $\psi$ to some fiber define us a morphism
 %\begin{equation}
  %\begin{split}  
   %&y_s = f_s(0,\dots,0,x_s,\dots,x_n)\\
   %&\ \ \ \ \ \ \ \ \ \ \ \ \ \ \ \cdots\\
  % &y_n = f_n(0,\dots,0,x_s,\dots,x_n)   
 % \end{split}
% \end{equation}
% As $f=\phi|_b$ is the morphism of projective spaces, so does the restriction. So we get $\psi:\p(E\slash E_0) \to \p(F\slash F_0)$ given by polynomials of degree $m$, since of degree bigger then one.
\begin{proof}[Proof of the Theorem \ref{Morphism}]
 We argue by induction on $n+1 = rk E$. If $rk E = 1$ then $E$ is already linear, so the base of induction is trivial. 
 
 Suppose now, that for all ranks less then $n+1$ the statement is true. The restriction 
of the morphism $\phi$ to a fiber gives us an element in $R^m(\p(V),\p(W))\slash (PGL(V) \times PGL(W))$. 
 
 If this element corresponds to a stable orbit in $R^m(\p(V),\p(W))$, then the argument in the proof of theorem 1 in \cite{A} proves that after a finite \'etale base change both $\p(E)$ and $\p(F)$ trivialize. 
 As the variety $B$ is simply-connected, there are no nontrivial \'etale base changes, so both $\p(E)$ and $\p(F)$ are trivial and hence split.
 
 If we get an unstable orbit, then by corollary \ref{subbundles} the bundles $E$ and $F$ sit in short exact sequences:
 \begin{equation}
  \begin{split}
   &0\to E_0\to E\to E\slash E_0\to 0 \\
   &0\to F_0\to F\to F\slash F_0\to 0
  \end{split}
 \end{equation}
 and there are morphisms given by polynomials of the same degree $m>1$ between the projectivisations of bundles $E_0$, $F_0$, $ E\slash E_0$ and  $F\slash F_0$, namely 
 \begin{equation} 
  \begin{split}
  &\phi_0: \p(E_0) \to  \p(F_0)\\
  &\psi: \p(E\slash E_0) \to \p(F\slash F_0)
  \end{split}
 \end{equation} 
 By the inductive assumption all these bundles must split into direct sums of line 
bundles. Since for any line bundle $\mathcal{L}$ on $B$, its first cohomology $H^1(B, \mathcal{L}) = 0$, we see that 
 \begin{equation}
  Ext^1(E\slash E_0, E_0)=Ext^1( F\slash F_0,F_0)=0
 \end{equation}
 So the extensions are trivial too. Consequently $E$ and $F$ split into a direct sum of line bundles.
\end{proof}

\medskip

{\bf Acknowledgements:} This paper has been prepared within the framework of a 
 subsidy granted to the NRU HSE Laboratory of Algebraic Geometry by the Government of the Russian Federation for the implementation of the Global 
 Competitiveness Program. The first-named author was partially supported by the 
Young Russian Mathematics award.

\noindent {\sc National Research University Higher School of Economics\\
Laboratory of Algebraic Geometry and Applications\\
6 Usacheva str., Moscow, Russia}

\end{document}